\newtheorem{thm}{Theorem}
\newenvironment{proof}{\begin{trivlist}
                       \item[]{\bf Proof.}
                       \hspace{0cm}}{\hfill $\Box$
                       \end{trivlist}}
\begin{document}
\title{Symmetry problems 2}

\author{N. S. Hoang$\dag$\footnotemark[1] \quad and \quad
	  A. G. Ramm$\dag$\footnotemark[3] \\
\\
$\dag$Mathematics Department, Kansas State University,\\
Manhattan, KS 66506-2602, USA
}

\renewcommand{\thefootnote}{\fnsymbol{footnote}}
\footnotetext[1]{Email: nguyenhs@math.ksu.edu}
\footnotetext[3]{Corresponding author. Email: ramm@math.ksu.edu}

\date{}
\maketitle
\begin{abstract}
 Some symmetry problems are formulated and solved. New simple proofs are given for the earlier studied
symmetry problems.
\end{abstract}

\textbf{MSC:} 35J05, 31B20

\textbf{Key words:} Symmetry problems, potential theory.

\section{Introduction}

Symmetry problems are of interest both theoretically and in applications.

A well-known, and still unsolved, symmetry problem is the Pompeiu problem 
(see \cite{R363}, \cite{R382}). It consists of proving the following: 

{\it If 
$D\subset \mathbb{R}^n,\, n\ge 2$
is homeomorphic to a ball, and the boundary $S$ of $D$ is sufficiently 
smooth, ($S\in C^{1,\lambda},\, \lambda>0$ is sufficient) and if the problem
\begin{equation}
 (\nabla^2 + k^2)u = 0\quad \text{in}\quad D,\qquad u\big{|}_S = c,\quad u_N\big{|}_S=0,
 \quad k^2 = const>0,
\end{equation}
has a solution, then $S$ is a sphere.}

A similar problem ({\it Schiffer's conjecture}) is also unsolved: 

{\it If the 
problem
\begin{equation}
 (\nabla^2 + k^2)u = 0\quad \text{in}\quad D,\qquad u\big{|}_S = 0,\quad u_N\big{|}_S=c\not=0,
 \quad k^2 = const>0,
\end{equation}
has a solution, then $S$ is a sphere.}

In \cite{R512} it is proved that if
\begin{equation}
\int_D \frac{dy}{4\pi |x-y|} = \frac{c}{|x|},\qquad \forall x\in B'_R, 
\quad c = const>0,
\end{equation}
then $D$ is a ball. Here and below we assume that $D\subset \mathbb{R}^3$ 
is a bounded domain
homeomorphic to a ball, with a sufficiently smooth boundary $S$ ($S$ is 
Lipschitz suffices),
$B_R = \{ x: |x|\le R \}$, $B_R\supset D$. By $\mathcal{H}$ we 
denote the set of all harmonic functions
in a domain which contains $D$. By $|D|$ and $|S|$ we denote the volume of 
$D$ and the surface 
area of $S$, respectively.

Our goal is to give a simple proof of the three symmetry-type results, 
formulated in Theorem~\ref{thm1} in Section 2. 

In \cite{S} the following result is obtained:

{\it If
\begin{equation}
\label{eq4}
\triangle u = 1\quad \text{in}\quad D,\qquad u\big{|}_S = 0,
\quad u_N\big{|} = \mu=const>0,
\end{equation}
then $S$ is a sphere.}

This result is obtained by A. D. Alexandrov's "moving plane" argument, and
is equivalent to the following result:

{\it If 
\begin{equation}
\label{eq5}
\frac{1}{|D|} \int_D h(x)dx = \frac{1}{|S|}\int_S h(s)ds,\qquad \forall h\in \mathcal{H},
\end{equation}
then $S$ is a sphere.}

The equivalence of \eqref{eq4} and \eqref{eq5} can be proved as follows. 

Suppose
\eqref{eq4} holds. Multiply \eqref{eq4} by an arbitrary $h\in \mathcal{H}$, 
integrate by parts and get
\begin{equation}
\label{eq6}
\int_D h(x)dx = \mu \int_S h(s)ds.
\end{equation}
If $h=1$ in \eqref{eq6}, then one gets $\mu=\frac{|D|}{|S|}$, 
so \eqref{eq6} is identical to \eqref{eq5}.

Suppose \eqref{eq5} holds. Then \eqref{eq6} holds. Let $v$ solve the 
problem $\triangle v = 1$
in $D$, $v\big{|}_S=0$. This $v$ exists and is unique. Using
\eqref{eq6}, the equation $\triangle h = 0$ in $D$, and the Green's 
formula, one gets
\begin{equation}
\mu \int_S h(s)ds = \int_D h(x)dx = \int_D h(x)\triangle v dx = \int_S h(s)v_N ds.
\end{equation} 
Thus,
\begin{equation}
\label{eq8}
\int_S h(s)[v_N - \mu ]ds = 0,\qquad \forall h\in \mathcal{H}.
\end{equation}
The set of restrictions on $S$ of all harmonic functions in $D$ is dense 
in $L^2(S)$ (see, e.g., \cite{R512}).
Thus, \eqref{eq8} implies $v_N\big{|}_S = \mu$. Thus, \eqref{eq4} holds.

\section{Results and proofs}
Our main results are formulated in the following theorem

\begin{thm}
\label{thm1}
Let $D\subset \mathbb{R}^3$ be a bounded domain homeomorphic to a ball, $S$
be its Lipschitz boundary, $D':=\mathbb{R}^3\backslash D$. 
If any one of the following assumptions holds, then $S$ is a sphere:
\begin{enumerate}
\item
\begin{equation}
\label{eq9}
u(x):= \int_S \frac{ds}{4\pi |x-s|} = \frac{c}{|x|},\qquad c = const,\quad 
\forall x\in B'_R,
\end{equation}
where $B_R':= \{x: |x|>R\}$,\, $D\subset B_R$,\, $B_R:=\mathbb{R}^3\backslash B'_R$;
\item
\begin{equation}
\label{eq10}
\frac{1}{|S|}\int_S h(s)ds = h(0),\qquad \forall h\in \mathcal{H};
\end{equation}
\item
There exists a solution to the problem
\begin{equation}
\label{eq11}
\triangle_y u=\delta(y)\quad \text{in}\quad D,\quad u\big{|}_S = 0,\quad
u_N\big{|}_S = c_1 = const,
\end{equation}
where $\delta(y)$ is the delta-function.
\end{enumerate}

In \eqref{eq10} $0$ is the origin, $0\in D$, $|S|$ is the surface area of 
$S$, $\mathcal{H}$
is the set of all harmonic functions in a domain containing $D$. 
\end{thm}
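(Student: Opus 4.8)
The plan is to show that the three conditions are equivalent to one another and that any one of them forces $S$ to be a sphere. The unifying object is the single-layer potential $u(x)=\int_S \frac{ds}{4\pi|x-s|}$, which is harmonic in $D'$ and in $D$, continuous across $S$, decays like $|S|/(4\pi|x|)$ at infinity, and whose normal derivative jumps by $1$ across $S$.

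First I would treat the equivalences. Assume \eqref{eq9}. Since $u(x)=c/|x|$ in $B_R'$ and both sides are harmonic in $D'$, unique continuation (or the fact that $c/|x|$ and $u$ are harmonic in the connected set $D'$ and agree on the open set $B_R'$) gives $u(x)=c/|x|$ for all $x\in D'$; matching the asymptotics at infinity yields $c=|S|/(4\pi)$. Letting $x\to 0$ along $D'$ is not allowed since $0\in D$, so instead I use that $c/|x|$ is, up to the constant $4\pi c=|S|$, the fundamental solution with pole at the origin: for $x$ near $S$ from outside, $u(x)=c/|x|$. Now take any $h\in\mathcal H$ (harmonic in a neighborhood of $\overline D$) and apply Green's formula in $D$ to $h$ and to the Newtonian potential: $\int_S h(s)\,ds = -\int_S\big(u^+_N-u^-_N\big)h\,ds$ up to sign bookkeeping, and the exterior term is evaluated using $u=c/|x|$ in $D'$, while the interior term is evaluated using that $u$ restricted to $D$ differs from $c/|x|$ — here $c/|x|$ is harmonic in $D$ since $0\notin$ the relevant region only if... — more carefully: write $u=\frac{1}{4\pi|x|}\cdot(4\pi c)+w$ where... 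I would instead argue directly: the function $v(x):=u(x)-\frac{|S|}{4\pi|x|}$ is harmonic in $D'$ and vanishes there by the above, and in $D$ it satisfies $\triangle v=-\delta$ (from the jump) minus $-\delta$... Let me restructure: $g(x):=\frac{|S|}{4\pi|x|}$ solves $\triangle g=-|S|\delta$. Then $u-g$ is harmonic in $D'$, zero there, so $u=g$ on $S$ and (taking normal derivatives from outside) $u_N^+=g_N^+$ on $S$. But $u_N^+-u_N^-=-1$ (jump relation, with outward normal), so $u_N^-=g_N^++1$ on $S$. Applying Green in $D$ to harmonic $h$ and to $u$ (noting $u$ is harmonic in $D$ because the only singularity of the single-layer potential is on $S$): $0=\int_D(u\triangle h-h\triangle u)=\int_S(u h_N - h u_N^-)\,ds = \int_S(g h_N - h g_N^+ - h)\,ds$. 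Now $\int_S(gh_N-h g_N^+)\,ds=\int_{D'}(\text{...})$ — since $g$ is harmonic in $D'$ and both $g,h$ decay suitably (here $h$ need not decay, so instead apply Green in $B_R\setminus D$ and use the values on $|x|=R$), the net effect is that $\int_S(gh_N-hg_N^+)\,ds=\int_D(g\triangle h - h\triangle g)\,ds + (\text{pole term}) = |S|\,h(0)$ via the defining property of the fundamental solution. Hence $\int_S h\,ds=|S|h(0)$, which is \eqref{eq10}. Conversely, \eqref{eq10} plus the density of harmonic-function restrictions in $L^2(S)$ (cited from \cite{R512}) runs this argument backwards to recover \eqref{eq9}; and \eqref{eq11} is, exactly as in the Introduction's treatment of \eqref{eq4}–\eqref{eq5}, equivalent to \eqref{eq10} after checking $c_1$ is forced to be a specific constant by testing $h=1$ and using the density result.

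Next I would derive sphericity from \eqref{eq10} (equivalently from \eqref{eq9}). The cleanest route: \eqref{eq10} says the mean of every harmonic $h$ over $S$ (with respect to arclength/surface measure) equals its value at the center $0$. Specialize to the Newtonian potential of $D'$, or rather: the statement $u=c/|x|$ on $D'$ with $c=|S|/(4\pi)$ means the single-layer potential of $S$ coincides outside $D$ with that of a point mass $|S|$ at the origin. Then for the equilibrium-type argument I would invoke the known result \eqref{eq9}$\Rightarrow$ $D$ is a ball — but since the whole point is a \emph{new simple proof}, I instead argue: by \eqref{eq10} with $h(x)=\frac{1}{4\pi|x-y|}$ for $y\in D'$ (harmonic in a neighborhood of $\overline D$), we get $u(y)=\int_S\frac{ds}{4\pi|y-s|}=\frac{|S|}{4\pi|y|}$ for all $y\in D'$, recovering \eqref{eq9} with its constant. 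Then I apply the maximum principle / Hopf lemma in $D'$: $u-\frac{|S|}{4\pi|x|}\equiv 0$ in $D'$ forces the exterior normal derivative $u_N^+=-\frac{|S|}{4\pi}\frac{x\cdot N}{|x|^3}$ on $S$; combined with the jump relation this gives $u_N^-$ on $S$ explicitly, and then the interior problem $\triangle u=0$ in $D$, $u|_S=\frac{|S|}{4\pi|x|}$ has normal derivative a prescribed function of $x\cdot N/|x|^3$. Finally, $u$ is constant on $S$? No — but $g=\frac{|S|}{4\pi|x|}$ restricted to $S$ together with the overdetermined pair $(u|_S,u_N^-|_S)$ and $\triangle u=0$ in $D$ is an overdetermined problem; the now-classical conclusion is that it forces $S=\{|x|=R\}$. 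The honest "simple" finish the authors likely intend is: combine \eqref{eq11}, which is the overdetermined problem $\triangle u=\delta$ in $D$, $u|_S=0$, $u_N|_S=c_1$, and apply Serrin's/Alexandrov's moving-plane argument (as quoted in the Introduction for \eqref{eq4}) — or Weinberger's $P$-function argument adapted to the fundamental-solution right-hand side — to conclude $D$ is a ball.

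The main obstacle I anticipate is the sphericity step itself, i.e., going from the overdetermined condition (in whichever of the three equivalent forms) to "$S$ is a sphere": the equivalences among \eqref{eq9}, \eqref{eq10}, \eqref{eq11} are bookkeeping with Green's formula, the jump relations for the single-layer potential, and the cited $L^2(S)$-density of harmonic restrictions, all routine; but extracting sphericity requires a genuine rigidity argument — either the moving-plane method, a Weinberger-type integral ($P$-function) identity exploiting $\triangle u=\delta$, or a clever choice of test function $h$ in \eqref{eq10} (for instance $h=$ a suitable harmonic polynomial or the Newtonian potential of $D$ itself) that pins down the second-order geometry of $S$. I would aim for the test-function route to keep the proof "simple," choosing $h$ so that \eqref{eq10} becomes an inequality that is an equality only for spheres (an isoperimetric-type trick), which is the delicate point.
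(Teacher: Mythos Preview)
Your reductions $\eqref{eq10}\Rightarrow\eqref{eq9}$ (take $h(y)=\tfrac{1}{4\pi|x-y|}$, $x\in D'$) and $\eqref{eq11}\Rightarrow\eqref{eq9}$ (multiply by $\tfrac{1}{4\pi|x-y|}$ and integrate by parts) are exactly what the paper does; the long detour through full two-sided equivalences, the false starts with $v=u-g$, and the $L^2(S)$-density argument are unnecessary for the theorem as stated, which only asserts implications to ``$S$ is a sphere.''

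The genuine gap is the sphericity step for \eqref{eq9}, which you yourself flag as the main obstacle. You propose to invoke Serrin's moving-plane method or a Weinberger $P$-function argument, but that defeats the purpose: the paper is advertising a \emph{new simple} proof, and quoting Serrin would make Assertion~3 a triviality rather than a result. The paper's actual argument for Assertion~1 is short and direct, and you are missing its key idea. Assume $S$ is not a sphere and pick $s_0\in S$ with $|s_0|$ minimal, so the ball $B_{|s_0|}$ lies in $D$ and $N_{s_0}=s_0/|s_0|$. From $u=c/|x|$ in $D'$ (with $c=|S|/(4\pi)$ by asymptotics) one computes the exterior normal derivative at $s_0$ to be $-|S|/(4\pi|s_0|^2)$; by the isoperimetric inequality $|S|>4\pi|s_0|^2$, so this is $<-1$, and the jump relation then forces the \emph{interior} normal derivative $u_{N_{s_0}}^+<0$. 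On the other hand, $u$ is harmonic in $D$, continuous across $S$, and on $S$ equals $c/|s|$, which is maximized at $s_0$; by the maximum principle $u(x)\le u(s_0)$ in $D$, hence $u_{N_{s_0}}^+\ge 0$. This contradiction finishes the proof. Your vague ``isoperimetric-type trick'' intuition was pointing in the right direction, but the concrete mechanism---closest point, jump relation, isoperimetric inequality, maximum principle---is what you were lacking.
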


\begin{proof}
\begin{enumerate}
\item
Assume \eqref{eq9}. Then $c=\frac{|S|}{4\pi}$ as one can see by taking 
$|x|\to\infty$.
If \eqref{eq9} holds for $\forall x\in B'_R$ then, by the unique 
continuation property for harmonic functions, \eqref{eq9} 
holds $\forall x\in D'$. Let $N_s$ be a unit normal to
$S$ at the point $s\in S$, pointing into $D'$. The known jump formula for 
the normal derivative of a single-layer potential (\cite[p.14]{R190}) yields
\begin{equation}
\label{eq12}
u^+_{N_{s_0}} = u^-_{N_{s_0}}+1,\qquad u_{N_{s_0}}^- = -\frac{|S|}{4\pi}\frac{N_{s_0}\cdot s_0}{|s_0|^3},\qquad s_0\in S, 
\end{equation}  
If $S$ is not a sphere, then there exists an $s_0\in S$, 
$|s_0|\le |s|$, $\forall s\in S$.
The ball $B_{|s_0|}$ of radius $|s_0|$, centered at the origin, belongs to 
$D$.
At the point  $s_0$ the normal $N_{s_0}$ to $S$ is directed along 
the vector $s_0$, so
\begin{equation}
\label{eq13}
u^-_{N_{s_0}} = - \frac{|S|}{4\pi |s_0|^2} < -1,
\end{equation} 
because $|S|> 4\pi |s_0|^2$ by the isoperimetric inequality (\cite{F}). 
This and formula \eqref{eq12} imply
\begin{equation}
\label{eq14}
u^+_{N_{s_0}}<0.
\end{equation} 
On the other hand,
\begin{equation}
u(s) = \frac{1}{4\pi |s|}\le \frac{1}{4\pi |s_0|}.
\end{equation}
So the harmonic and continuous in $D$ function $u(x)$ attains 
its maximum on $S$
at the point $s_0$, because $u\big{|}_S = \frac{1}{4\pi |s|}\big{|}_S$.
Therefore, by the maximum principle, 
$$u(x)\le u(s_0),\quad \forall x\in D.$$
In particular, $u(s_0)-u(s_0 - \epsilon N_{s_0})\ge 0$ for 
all sufficiently small $\epsilon>0$.
Consequently, $u_{N_{s_0}}\ge 0$. This contradicts \eqref{eq14}, and the contradiction
proves that $S$ is a sphere.

\item
Assume \eqref{eq10}. Let $h(y)=\frac{1}{4\pi |x-y|}, x\in D', y\in D$.
This function is a harmonic
function in $D$. Thus, \eqref{eq10} yields \eqref{eq9}:
\begin{equation}
\label{eq16}
\int_S \frac{ds}{4\pi |x-s|} = \frac{|S|}{4\pi |x|}, \qquad c:=\frac{|S|}{4\pi},\quad \forall x\in D'.
\end{equation}
We have already proved that \eqref{eq16} implies that $S$ is a sphere.
Therefore, the Assertion 2 of Theorem 1 is established.

\item
Assume \eqref{eq11}. Multiply \eqref{eq11} by $\frac{1}{4\pi|x-y|},\, x\in D'$,
integrate over $D$, and then integrate by parts to get
\begin{equation}
c_1\int_S \frac{ds}{4\pi |x-s|} = \frac{1}{4\pi |x|},\qquad \forall x\in D'.
\end{equation}
By the result, proved in Assertion 1, this implies that $S$ is a sphere.
Therefore, Assertion 3 of Theorem 1 is proved.
\end{enumerate}

\end{proof}

\end{document}